\newtheorem{theorem}{Theorem}[section]
\newtheorem{lemma}[theorem]{Lemma}
\newtheorem{proposition}[theorem]{Proposition}
\newtheorem{definition}[theorem]{Definition}
\newtheorem{remark}[theorem]{Remark}
\theoremstyle{definition}
\newtheorem{exmp}[theorem]{Example}
\title{Reciprocity and the Kernel of Dedekind Sums}
\author{Alexis LaBelle}
\address{High Point University \\ High Point, NC 27268}
\email{alabelle@highpoint.edu}
\author{Emily Van Bergeyk}
\address{Azusa Pacific University \\ Azusa, CA 91702}
\email{evanbergeyk17@apu.edu}
\author{Matthew P. Young}
\address{Department of Mathematics \\
 	  Texas A\&M University \\
 	  College Station \\
 	  TX 77843-3368 \\
 		U.S.A.}
 \email{myoung@math.tamu.edu}
\begin{document}

\begin{abstract} We use the action of Atkin-Lehner operators to generate a family of reciprocity formulas for newform Dedekind sums. 
This family of reciprocity formulas provides symmetries which we use to investigate the kernel of these Dedekind sums.
\end{abstract}

\maketitle

\section{Introduction} \label{intro}
\subsection{Motivation} \label{motive}
 Dedekind sums were introduced as a means of expressing the transformation formula of the Dedekind eta function. 
These sums appear in numerous contexts, including topology, quadratic reciprocity, and modular forms. 
 More background information on the classical Dedekind sum and Dedekind eta function can be found in \cite{apostle}. 
 
 Newform Dedekind sums $S_{\chi_1, \chi_2}$ associated to a pair of primitive Dirichlet characters $\chi_1, \chi_2$ (modulo $q_1$, $q_2$, respectively) were defined in \cite{2019}, and many of their basic properties were developed therein.  
 The most important property is that each Dedekind sum is a group homomorphism on $\Gamma_1(q_1 q_2)$ (see Lemma \ref{grouphomom}).
 In addition, there is a reciprocity formula relating $S_{\chi_1, \chi_2}$ to $S_{\chi_2, \chi_1}$  (see \cite[Thm 1.3]{2019} or \eqref{eq:SVYreciprocity} below for the precise statement).
 This reciprocity formula generalizes the well-known reciprocity formula for the classical Dedekind sum, which has numerous applications including a proof of quadratic reciprocity, fast calculation of Dedekind sums, etc.
 The main result in this paper, stated in Theorem \ref{IntroSimplifiedReciprocity} below, further generalizes this by giving a \emph{family} of reciprocity formulas.  

The kernel of a Dedekind sum, i.e., the set of elements of $\Gamma_1(q_1 q_2)$ for which the Dedekind sum vanishes, was introduced and studied in \cite{2020}.  The reciprocity formula from \cite{2019} played a crucial role in \cite{2020} in understanding certain experimentally-observed patterns in the kernel.
However, there were other observed patterns that were not explained in \cite{2020}.
As an application of our new reciprocity formulas, our second main result, stated in Theorem \ref{kerthm1} below, identifies additional families of elements lying in the kernel.

The key tool in the proof of the reciprocity formula in \cite{2019} is understanding the action of the Fricke involution on Dedekind sums.  Here we extend this by developing the action of all the Atkin-Lehner operators on the newform Dedekind sums.

\subsection{The reciprocity formula}
 We begin with some notation.  Let $\chi_1 \pmod{q_1}$ and $\chi_2 \pmod{q_2}$ be primitive Dirichlet characters, with $\chi_1 \chi_2(-1) = 1$.  
Let $B_1$ be the first Bernoulli function defined by $$B_1(x)=
\begin{cases}
    x-\left \lfloor{x}\right \rfloor-\frac{1}{2}, & \text{if } x \in \mathbb{R} \backslash \mathbb{Z} \\
    0, &  \text{if } x \in \mathbb{Z}. 
\end{cases}$$ 
Suppose $\gamma = (\begin{smallmatrix}
a & b \\
c & d 
\end{smallmatrix}) \in \Gamma_0(q_1 q_2)$ with $c \geq 1$.
 For $q_1, q_2 > 1$, define the Dedekind sum by
\begin{equation}\label{dede}
S_{\chi_1,\chi_2}(\gamma)= S_{\chi_1, \chi_2}(a,c) = \sum\limits_{j\hspace{-0.6em}\mod c}\sum\limits_{n\hspace{-0.6em}\mod q_1}\overline{\chi_2}(j)\overline{\chi_1}(n) B_1\left(\frac{j}{c}\right)B_1\left(\frac{n}{q_1}+\frac{aj}{c}\right).
\end{equation}
To be clear, we will use a different definition of the Dedekind sum (see Section \ref{dedesums}) which is better-suited for our purposes.  In fact, \eqref{dede} is derived in \cite{2019} only after substantial work. 

 Let $W_Q$ be an Atkin-Lehner operator on $\Gamma_0(N)$ as defined in Section \ref{atkin}, with $N= q_1 q_2 = QR$, and with $(Q, R) = 1$.
A Dirichlet character $\chi_i \pmod{q_i}$ factors uniquely as $\chi_i = \chi_i^{(Q)} \chi_i^{(R)}$ where $\chi_i^{(Q)}$ has conductor $(q_i, Q) =: q_i^{(Q)}$ and  $\chi_i ^ {(R)}$ has conductor $(q_i, R) =: q_i^{(R)}$. Define $\chi_1', \chi_2'$ by
\begin{equation}\label{defprimes}
\chi_1'=\chi_2^{(Q)}\chi_1^{(R)}
\qquad
\text{and}
\qquad
\chi_2'=\chi_1^{(Q)}\chi_2^{(R)}.
\end{equation}
In words, $\chi_1'$ and $\chi_2'$ result from exchanging the $Q$-portions of $\chi_1$ and $\chi_2$.
Here $\chi_i'$ has conductor $q_i'$, where $q_1'=q_2^{(Q)}q_1^{(R)}$ and $q_2'=q_1^{(Q)}q_2^{(R)}$.

In simplified cases, our new family of reciprocity formulas takes the following form.
\begin{theorem} [Simplified Reciprocity Formula]\label{IntroSimplifiedReciprocity} 
Let $q_1,q_2$ be such that $q_1 q_2 > 1$.  
Let $N = q_1 q_2$, suppose $\gamma \in \Gamma_1 (N)$, and define $\gamma' \in \Gamma_1 (N)$ by $W_Q \gamma = \gamma' W_Q$.
Then
\begin{equation}
\label{eq:ALreciprocitySimplified}
 S_{\chi_1,\chi_2}(\gamma') = {\xi}S_{\chi'_1, \chi'_2}(\gamma) 
\end{equation}
\noindent where $\xi$ has absolute value one and depends on $\chi_1$, $\chi_2$, $Q$, and the entries of $W_Q$.
\end{theorem}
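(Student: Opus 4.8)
The plan is to realize the Dedekind sum as (a piece of) the transformation cocycle of a suitable Eisenstein-type modular object and then track how that cocycle transforms when one conjugates by the Atkin-Lehner operator $W_Q$. Concretely, I would work with the alternate definition of $S_{\chi_1,\chi_2}$ promised in Section~\ref{dedesums}, which presumably writes the Dedekind sum as the period/cocycle attached to an Eisenstein series $E_{\chi_1,\chi_2}$ (or to $\log$ of an eta-quotient analogue) on $\Gamma_0(N)$. The homomorphism property on $\Gamma_1(N)$ (Lemma~\ref{grouphomom}) strongly suggests $S_{\chi_1,\chi_2}(\gamma)$ arises as the additive ``error term'' in a transformation law $F|\gamma = F + c(\gamma)$, so the whole computation reduces to understanding how $F$ behaves under $W_Q$.

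The key step is to compute the action of the Atkin-Lehner operator on the relevant Eisenstein series. I expect that $E_{\chi_1,\chi_2}|W_Q$ is, up to an explicit scalar $\xi$ of modulus one (built from Gauss sums of the $Q$-parts $\chi_i^{(Q)}$ and the entries of $W_Q$), equal to $E_{\chi_1',\chi_2'}$, where $\chi_1',\chi_2'$ are exactly the characters obtained by swapping the $Q$-portions as in \eqref{defprimes}. This is the genuine content: Atkin-Lehner operators permute the cusps and twist Eisenstein series by rearranging their character data, and the constant $\xi$ of absolute value one appears as the normalizing factor (a ratio of Gauss sums times a power of $Q$). Granting such an identity $E_{\chi_1,\chi_2}|W_Q = \xi\, E_{\chi_1',\chi_2'}$, the reciprocity formula follows formally: I would write the defining relation $W_Q \gamma = \gamma' W_Q$, apply it inside the cocycle, and use that $F=E_{\chi_1',\chi_2'}$ has cocycle $S_{\chi_1',\chi_2'}$ while $E_{\chi_1,\chi_2}|W_Q$ has cocycle $S_{\chi_1,\chi_2}$ evaluated at the conjugate. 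Matching the two expressions and extracting the additive part turns the multiplicative scalar $\xi$ into the factor appearing in \eqref{eq:ALreciprocitySimplified}, with the statement $S_{\chi_1,\chi_2}(\gamma') = \xi\, S_{\chi_1',\chi_2'}(\gamma)$.

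The main obstacle will be pinning down the action of $W_Q$ on the Eisenstein series precisely, including the exact scalar $\xi$ and the swapping of character data. Two technical points demand care. First, since $\chi_i$ factors as $\chi_i^{(Q)}\chi_i^{(R)}$ with coprime moduli, I must verify that conjugation by $W_Q$ acts only on the $Q$-components and fixes the $R$-components; this is where the coprimality $(Q,R)=1$ and the uniqueness of the factorization are essential, and it is what produces precisely the swap $\chi_1'=\chi_2^{(Q)}\chi_1^{(R)}$, $\chi_2'=\chi_1^{(Q)}\chi_2^{(R)}$ rather than some other rearrangement. Second, $W_Q$ is only defined up to left multiplication by $\Gamma_0(N)$, so I would have to check that the scalar $\xi$ and the resulting formula are independent of the chosen representative of $W_Q$, or else state the dependence on ``the entries of $W_Q$'' explicitly as the theorem does.

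A cleaner alternative, if the cocycle manipulation is delicate, is to prove \eqref{eq:ALreciprocitySimplified} directly from the homomorphism property together with the known Fricke-involution case from \cite{2019}. Since $W_N$ (the Fricke involution) already yields the reciprocity formula \eqref{eq:SVYreciprocity}, and a general $W_Q$ can be analyzed by factoring $N=QR$, I would try to bootstrap the general Atkin-Lehner statement from the $Q=N$ case by composing with the complementary operator $W_R$ and using that $W_Q W_R$ differs from $W_N$ by an element of $\Gamma_0(N)$. Because each $S_{\chi_1,\chi_2}$ is a homomorphism on $\Gamma_1(N)$, the contribution of such a $\Gamma_0(N)$-adjustment is controlled, and composing the two known transformation laws should assemble the desired $\xi$ multiplicatively. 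This route trades the analytic computation of the Eisenstein action for bookkeeping of Gauss-sum scalars under composition, and I expect that bookkeeping—keeping every root of unity and Gauss-sum factor consistent—to be the real labor regardless of which of the two approaches I pursue.
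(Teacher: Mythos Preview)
Your primary approach is exactly the paper's: define $S_{\chi_1,\chi_2}$ as (a normalization of) the transformation cocycle $\phi_{\chi_1,\chi_2}(\gamma)=F_{\chi_1,\chi_2}(\gamma z)-\psi(\gamma)F_{\chi_1,\chi_2}(z)$ of the holomorphic piece $F_{\chi_1,\chi_2}$ of the completed Eisenstein series, invoke the Atkin--Lehner pseudo-eigenvalue relation $E^*_{\chi_1,\chi_2}(W_Qz)=\beta E^*_{\chi_1',\chi_2'}(z)$, and then expand $F_{\chi_1,\chi_2}(W_Q\gamma z)=F_{\chi_1,\chi_2}(\gamma'W_Qz)$ in two ways to match cocycles. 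The one point you gloss over is that the pseudo-eigenvalue relation holds for $E^*$ but for $F$ only up to an additive constant; the paper introduces $\phi_{\chi_1,\chi_2}(W_Q):=F_{\chi_1,\chi_2}(W_Qz)-\beta F_{\chi_1',\chi_2'}(z)$, proves it is constant in $z$, and these extra terms appear in the general $\Gamma_0(N)$ formula (Theorem~\ref{rec}) but cancel in the simplified $\Gamma_1(N)$ case with the same $W_Q$ on both sides.

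Your alternative bootstrap from the Fricke case is not what the paper does and, as stated, is circular: writing $W_QW_R$ as $W_N$ times an element of $\Gamma_0(N)$ lets you deduce the $W_N$ reciprocity from the $W_Q$ and $W_R$ cases, not the other way around, so it does not reduce the unknown $W_Q$ case to the known $W_N$ case.
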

\noindent The more general statement of this reciprocity formula, including an explicit value of $\xi$, is presented in Theorem \ref{rec}.

A special case of Theorem \ref{IntroSimplifiedReciprocity} occurs with 
$ W_N=(\begin{smallmatrix}
0 & -1 \\
q_1q_2 & 0 
\end{smallmatrix})$ 
chosen to be the Fricke involution. If $\gamma = (\begin{smallmatrix} a & b \\ c q_1 q_2 & d \end{smallmatrix}) \in \Gamma_1(q_1 q_2)$, then $\gamma' = (\begin{smallmatrix} d & -c \\ -b q_1 q_2 & a \end{smallmatrix})$. Moreover, \eqref{defprimes} says $\chi_1' = \chi_2$ and $\chi_2' = \chi_1$, and  \eqref{eq:ALreciprocitySimplified} becomes
\begin{equation}
\label{eq:SVYreciprocity}
    S_{\chi_1, \chi_2}(\gamma) = \pm S_{\chi_2, \chi_1}(\gamma')
\end{equation}
where $\pm = +1$ if both $\chi_i$ are even, and $\pm = -1$ if both $\chi_i$ are odd. This is essentially Theorem 1.3 in \cite{2019}.

Note that \cite{2019} and \cite{2020} considered Dedekind sums with $q_1>1$ and $q_2 >1$. 
It can happen in \eqref{eq:ALreciprocitySimplified} that $q_1>1$ and $q_2>1$, but that $q_1' =1$ or $q_2'=1$.  For this reason, we were naturally led to extend the definition of Dedekind sums to allow at most one $q_i$ to equal $1$, and to develop some of their properties generalizing results from \cite{2019} and \cite{2020}.  The reader should beware that the formula \eqref{dede} no longer holds when some $q_i=1$, and in fact $S_{1,\chi_2}(\gamma)$ does not depend only on the first column of $\gamma$ (e.g., see \eqref{eq:DedSumUpperTriangularchi1trivial} below).  
Some of the forthcoming results, such as Theorem \ref{kerthm1} below, have some hypotheses ultimately due to subtleties arising from $q_i =1$.

\subsection{The kernel of Dedekind sums} \label{introkernel}

\begin{definition}[\cite{2020} Defn. 1.4] 
Let $\chi_1$ and $\chi_2$ be primitive Dirichlet characters modulo $q_1$ and $q_2$, respectively, with $q_1>1$ and $q_2 > 1$. Then let 
\begin{equation*}
K_{\chi_1,\chi_2}=  \{\gamma \in \Gamma_0(N) \mid S_{\chi_1,\chi_2}(\gamma) = 0\},
\quad
 K_{q_1,q_2}= \cap_{\chi_1, \chi_2} K_{\chi_1,\chi_2},
 \quad
 K_{q_1,q_2}^1= K_{q_1,q_2} \cap \Gamma_1(N).
\end{equation*}
\end{definition} 
An investigation of these kernels was undertaken in \cite{2020}.  The authors used SageMath \cite{sage} and \eqref{dede} to compute $S_{\chi_1, \chi_2}(a,c)$ for all $a \pmod{c}$ and $c \leq 10 q_1 q_2$, and various small values of $q_1, q_2$.  Using this data, they recorded those elements lying in $K_{\chi_1, \chi_2}$, $K_{q_1, q_2}$, and so on.
In Figure \ref{figureIntro2020elements}, we have reproduced the elements of $K_{3,5}$ from \cite{2020},
where the horizontal axis corresponds to the $a$-value, and the vertical axis corresponds to the $c$-value.
In the graph of $K_{3,5}$ in  Figure \ref{figureIntro2020elements}, we have highlighted elements in the kernel that were proved to exist in \cite{2020} and which crucially used \eqref{eq:SVYreciprocity}. 
By applying Theorem \ref{rec}, we 
additionally 
explain all but two of the remaining elements in Figure \ref{figureIntro2020elements} (see Theorem \ref{kerthm1} and \eqref{-1}).

\begin{figure}[H]
  \centering
  \includegraphics[width=3in]{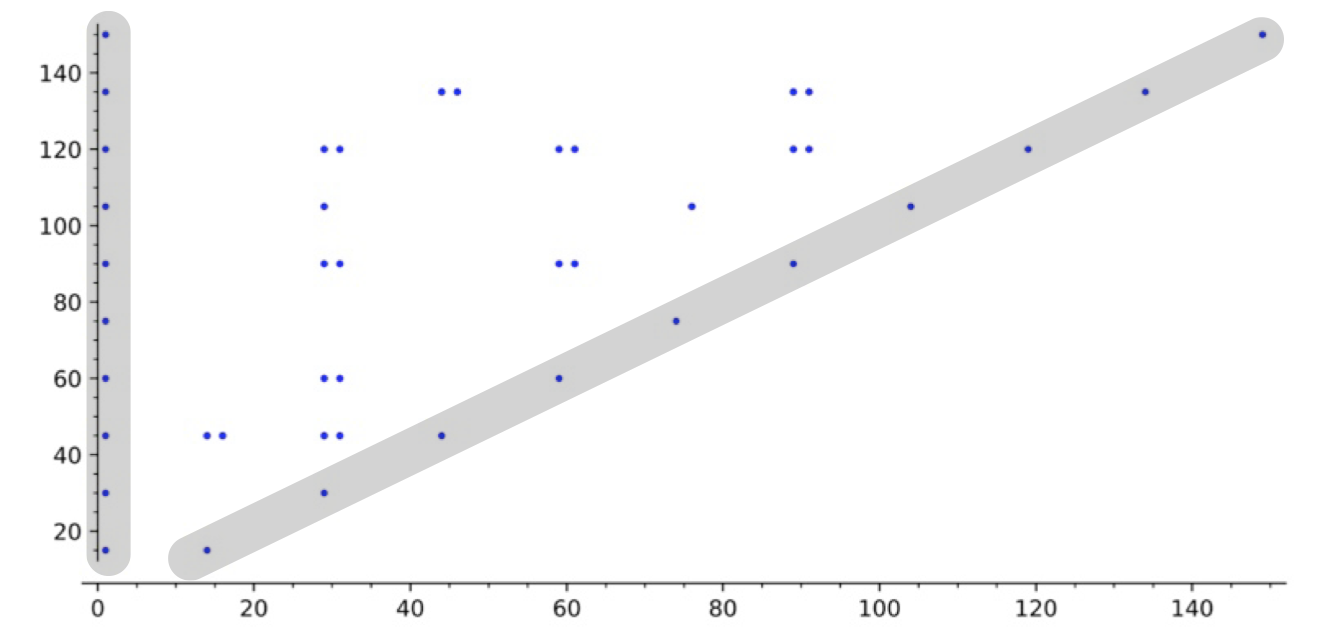}
  \caption{Elements of $K_{3,5}$ with $1 \leq c \leq 10q_1q_2$}
  \label{figureIntro2020elements}
\end{figure}

 The following relationship follows from Theorem \ref{IntroSimplifiedReciprocity}: 
 \begin{equation}\label{iff}
 S_{\chi_1', \chi_2'}(\gamma) = 0 \iff S_{\chi_1, \chi_2}(\gamma') = 0.
 \end{equation}
 Technically, the simplified version in Theorem \ref{IntroSimplifiedReciprocity} gives 
 this relation if $\gamma, \gamma' \in \Gamma_1(N)$, but the more general reciprocity formula in Theorem \ref{rec} implies \eqref{iff} for $\gamma, \gamma' \in \Gamma_0(N)$.
 Using \eqref{iff}, we derive the following theorem (see Section \ref{kernel} for the proof).
\begin{theorem}
\label{kerthm1}
Let $q_1 q_2 = N=QR$ with $(Q,R)=1$, $q_1 \neq 1$, and $q_2 \neq R$. Suppose $r,u \in \mathbb{Z}$ with $(r,R)=1$ and $(u,Q)=1$.  Then for any $k \in \mathbb{Z}$, we have 
$$S_{\chi_1,\chi_2}(\pm 1 + Nkur,NRku^2)=0. $$
\end{theorem}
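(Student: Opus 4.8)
The plan is to recognize the target matrices as powers of a single parabolic element, reduce to that generator via the homomorphism property, and then use the new reciprocity \eqref{iff} to transport the vanishing to an element on which it is (more) transparent. First I would realize the first column $(1+Nkur,NRku^2)$ as the first column of $M^k$, where
\[
M = \begin{pmatrix} 1 + Nur & -Qr^2 \\ NRu^2 & 1 - Nur \end{pmatrix}.
\]
A direct check using $QR=N$ shows $\det M = 1$ and $M \in \Gamma_1(N)$, and that $M$ has trace $2$; hence $M-I$ is nilpotent, $M^k = I + k(M-I)$, and $M^k$ has first column $(1+Nkur,NRku^2)$. Since each Dedekind sum is a homomorphism on $\Gamma_1(N)$ (Lemma \ref{grouphomom}), $S_{\chi_1,\chi_2}(M^k) = k\,S_{\chi_1,\chi_2}(M)$, so it suffices to prove $S_{\chi_1,\chi_2}(M)=0$. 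The minus-sign case is the first column of $-M^{-k}$, and I would deduce it from the plus case using the behavior of $S_{\chi_1,\chi_2}$ under negation of the first column (available since $\chi_1\chi_2(-1)=1$).

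Next I would invoke \eqref{iff} (coming from Theorem \ref{rec}) for a convenient Atkin--Lehner representative, say $W_Q = (\begin{smallmatrix} Q & \beta \\ N & Q\delta \end{smallmatrix})$ with $Q\delta - R\beta = 1$, reducing the claim to $S_{\chi_1',\chi_2'}(\tilde M)=0$, where $\tilde M = W_Q^{-1} M W_Q$. Conjugating the nilpotent $M-I$ and simplifying with $QR=N$ and $Q\delta - R\beta = 1$, I expect $\tilde M$ to be parabolic with first column $(1 + N(u-r)s,\, N^2(u-r)^2)$, where $s = Q\delta r - R\beta u$. The hypotheses $(r,R)=1$ and $(u,Q)=1$ then give $s \equiv r \pmod R$ and $s \equiv u \pmod Q$, whence $(s,N)=1$; meanwhile $q_1 \neq 1$ keeps the target sum nondegenerate, and $q_2 \neq R$ guarantees $q_1' \neq 1$, so the properties of $S_{\chi_1',\chi_2'}$ that we need remain in force.

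The main obstacle is the final step, proving $S_{\chi_1',\chi_2'}(\tilde M)=0$. This is genuinely arithmetic rather than formal: $\tilde M$ is a nontrivial parabolic (its lower-left entry $N^2(u-r)^2$ is nonzero) and Dedekind sums do not vanish on all parabolics; moreover its fixed cusp $s/(N(u-r))$ need not be $\Gamma_1(N)$-equivalent to $\infty$, so one cannot simply conjugate $\tilde M$ to a translation inside $\Gamma_1(N)$. Here I would use that $\tilde M$ satisfies $a \equiv 1 \pmod N$ with lower-left entry divisible by $N^2$, together with the coprimality $(s,N)=1$, to evaluate $S_{\chi_1',\chi_2'}(\tilde M)$ from the explicit formula \eqref{dede} (or the definition in Section \ref{dedesums}) and exhibit the resulting character-weighted sum as a complete sum that cancels; alternatively I would identify $\tilde M$ with a member of a base family of kernel elements already available for $S_{\chi_1',\chi_2'}$ (in the spirit of \cite{2020}) and transport it back through $W_Q$. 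This is exactly where the coprimality hypotheses and the $q_i=1$ subtleties (controlled by $q_1\neq 1$ and $q_1'\neq 1$) must be handled with care.

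Finally I would reverse the reduction: \eqref{iff} converts $S_{\chi_1',\chi_2'}(\tilde M)=0$ into $S_{\chi_1,\chi_2}(M)=0$, and the homomorphism property upgrades this to $S_{\chi_1,\chi_2}(M^k)=0$ for all $k$, yielding $S_{\chi_1,\chi_2}(\pm 1 + Nkur, NRku^2)=0$.
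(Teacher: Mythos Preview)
Your setup is sound and your computation of the first column of $\tilde M$ is correct, but the ``main obstacle'' you flag is self-inflicted: it comes from fixing the Atkin--Lehner representative with top-left entry $Q$ and bottom-left entry $N$. The definition in Section~\ref{atkin} deliberately allows arbitrary $r_0,u_0$ with $(r_0,R)=(u_0,Q)=1$, precisely so that one can take $W_Q=\left(\begin{smallmatrix}Qr & t\\ Nu & Qv\end{smallmatrix}\right)$ with the \emph{same} $r,u$ that appear in the theorem. With this choice the first column of $W_Q$ is the eigenvector of your $M$, and a direct computation gives $W_Q^{-1}MW_Q=\left(\begin{smallmatrix}1 & -1\\ 0 & 1\end{smallmatrix}\right)$. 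Since $q_2\neq R$ forces $q_1'\neq1$ (as you correctly note), $S_{\chi_1',\chi_2'}$ vanishes on this translation by \eqref{phidef} and \eqref{eq:fchi1chi2FourierExpansion}, and \eqref{iff} finishes. The paper in fact skips your $M^k$ reduction entirely by taking $\gamma=\left(\begin{smallmatrix}\pm1 & -k\\ 0 & \pm1\end{smallmatrix}\right)$ from the outset and reading off the first column of $\gamma'=W_Q\gamma W_Q^{-1}$ from \eqref{eq:bruteforce}.

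Your two fallback proposals---evaluating \eqref{dede} for $\tilde M$ by hand, or matching $\tilde M$ to a pre-existing kernel family---are not carried out, and the second is essentially circular. If you insist on the $r=u=1$ representative, the honest salvage is to note that your $\tilde M$ is $\Gamma_0(N)$-conjugate to $\left(\begin{smallmatrix}1 & -1\\ 0 & 1\end{smallmatrix}\right)$ via $\alpha=W_Q^{-1}W_Q^{(r,u)}\in\Gamma_0(N)$, and then the crossed-homomorphism relation of Lemma~\ref{grouphomom} (stated on all of $\Gamma_0(N)$, not just $\Gamma_1(N)$) gives $S_{\chi_1',\chi_2'}(\alpha T\alpha^{-1})=\psi'(\alpha)\,S_{\chi_1',\chi_2'}(T)=0$. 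So your worry about $\Gamma_1(N)$-equivalence of cusps is misplaced---but this detour is just the paper's one-line argument rewritten the long way.
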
 
\begin{exmp} 
We illustrate Theorem \ref{kerthm1}'s ability to justify many new elements displayed in Figure \ref{figureIntro2020elements}.
Let $N=15$, $R = 3$, $k=u=1$.  Taking $r=1,2$ reveals that $(\pm 1 + 15,45)$ and $(\pm 1 + 30,45)$ are in the kernel.
Taking $k=2,3$ demonstrates the same for the non-highlighted points in Figure \ref{figureIntro2020elements} with $c=90$ and $c=135$.
Now let $R = 1$. Then, taking $r=1$, $u=2$, and $k=1,2$ explains that the non-highlighted points with $c=60$ and $c=120$ are also in the kernel. The only remaining unexplained points from Figure \ref{figureIntro2020elements} are those such that $c=105$ and $a=29$, $76$. These points are not obtainable from Theorem \ref{kerthm1}. This can be seen without computation by noting that the points proved to be in the kernel via Theorem \ref{kerthm1} come in pairs with the values of $a$ separated by two, yet the points in the row $c=105$ do not occur in such pairs.x



Also note that since $a^2 \equiv 1 \pmod{105}$ for $a=29, 76$, then \cite[Prop. 2.2]{2020} may be used to explain these remaining points; we leave the details for an interested reader.

\end{exmp}

\section{Background}

\subsection{Atkin-Lehner Operators}\label{atkin}
Let $N = QR$ with $(Q,R) = 1$.  Following \cite{ali}, we define
an Atkin-Lehner operator $W_Q$ on $\Gamma_0(N)$ as follows.  
Let $r_0, u_0 \in \mathbb{Z}$ be such that $(r_0, R) = 1$ and $(u_0, Q) = 1$.  Let
 $r,t,u,v \in \mathbb{Z}$ with $r \equiv r_0 \pmod{R} $, $u \equiv u_0 \pmod{Q}$, and $Qrv-Rut=1$.  Then we define
$$W_{Q}^{(r_0, u_0)} = W_Q = \begin{pmatrix}
Qr & t \\
Nu & Qv 
\end{pmatrix}.$$ 
This definition preserves the essential properties (see Lemma \ref{a-lprop} below) of the operators as given in \cite{ali}, which took $u_0 = r_0 = 1$. The added flexibility in $u_0$ and $r_0$ will be helpful in Section \ref{kernel}. The relaxed restrictions also mean the Fricke involution is a specialization of the Atkin-Lehner operators: take $Q = N$, $R=1$, $u = u_0 =1$, $t= -1$, $r=v=r_0=0$, so $W_Q =  
(\begin{smallmatrix}
0 & -1\\
N & 0
\end{smallmatrix})$, the Fricke involution.

Suppose that $W_Q$ and $W_Q'$ are Atkin-Lehner operators (with possibly different values of $r,t,u,v$, but the same choice of $r_0$, $u_0$).  For $\gamma \in \Gamma_0(N)$, define $\gamma'$ via
\begin{equation} \label{wqgamma}
W_Q \gamma = \gamma'W_Q'.
\end{equation}

\begin{lemma}
\label{a-lprop}
With $\gamma'$ defined as in \eqref{wqgamma}, we have $\gamma' \in \Gamma_0(N)$.  Let $d_{\gamma}$ and $d_{\gamma'}$ be the lower-right entries of $\gamma$ and $\gamma'$, respectively.  Then
\begin{equation}
    d_{\gamma'} \equiv \begin{cases}
        d_{\gamma} \pmod{R}, \\
        d_{\gamma}^{-1} \pmod{Q}.
    \end{cases}
\end{equation}
\end{lemma}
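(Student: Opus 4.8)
The plan is to realize $\gamma'$ explicitly as $\gamma' = W_Q\,\gamma\,(W_Q')^{-1}$ and read everything off from this product. First I would record that $\det W_Q = Q^2 rv - QRut = Q(Qrv - Rut) = Q$, and likewise $\det W_Q' = Q$, so that
\[
(W_Q')^{-1} = \tfrac{1}{Q}\begin{pmatrix} Qv' & -t' \\ -Nu' & Qr' \end{pmatrix},
\]
where $r',t',u',v'$ denote the entries of $W_Q'$. Since $r \equiv r' \equiv r_0 \pmod{R}$ and $u \equiv u' \equiv u_0 \pmod{Q}$, reducing the two determinant relations $Qrv - Rut = 1$ and $Qr'v' - Ru't' = 1$ modulo $R$ and modulo $Q$ respectively (using $(Q,R)=1$) yields the auxiliary congruences $v \equiv v' \pmod{R}$ and $t \equiv t' \pmod{Q}$, which I will need at the end. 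The proof then splits into showing $\gamma' \in \Gamma_0(N)$ and computing $d_{\gamma'}$ modulo $R$ and modulo $Q$.

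For membership, I would multiply out $\gamma' = (W_Q\gamma)(W_Q')^{-1}$ entrywise, writing $\gamma = (\begin{smallmatrix} a & b \\ c & d \end{smallmatrix})$ with $N \mid c$ and $ad - bc = 1$. The determinant is $\det W_Q \cdot 1 \cdot (\det W_Q')^{-1} = 1$ automatically, so the content is to check that (i) all four entries are integers despite the overall factor $1/Q$, and (ii) the lower-left entry is divisible by $N$. Both reduce to routine divisibility bookkeeping: the apparent denominators $Q$ always cancel after using $N = QR$ and $N \mid c$ (for instance, a term of shape $N^2 uu'b/Q$ becomes $NRuu'b$, which is divisible by $N$, and $tt'c/Q$ is an integer since $Q \mid N \mid c$). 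I expect this integrality and divisibility accounting to be the main obstacle, in the sense that it is the only place genuine care is needed; conceptually it is just the statement that $W_Q$ normalizes $\Gamma_0(N)$ and that $W_Q(W_Q')^{-1} \in \Gamma_0(N)$, so one could alternatively invoke the normalizer property of the Atkin--Lehner operators instead of computing.

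For the congruences, the only entry I actually need is the lower-right one, which the same multiplication gives as
\[
d_{\gamma'} = -t'Rua - t'vc + r'Nub + r'Qvd.
\]
Modulo $R$, the first three terms vanish since $R \mid N$ and $R \mid c$, leaving $d_{\gamma'} \equiv r'Qv\,d \pmod{R}$; as $v \equiv v' \pmod{R}$ and $Qr'v' \equiv 1 \pmod{R}$, the coefficient satisfies $r'Qv \equiv 1$, so $d_{\gamma'} \equiv d = d_\gamma \pmod{R}$. Modulo $Q$, every term carrying a factor $N$ or $Q$ vanishes, as does $t'vc$ because $Q \mid c$, leaving $d_{\gamma'} \equiv -t'Ru\,a \pmod{Q}$; as $u \equiv u' \pmod{Q}$ and $-Ru't' \equiv 1 \pmod{Q}$, the coefficient satisfies $-t'Ru \equiv 1$, so $d_{\gamma'} \equiv a \pmod{Q}$. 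Finally, from $ad - bc = 1$ and $Q \mid c$ we get $ad \equiv 1 \pmod{Q}$, that is $a \equiv d^{-1} = d_\gamma^{-1} \pmod{Q}$, which establishes the second congruence and completes the proof.
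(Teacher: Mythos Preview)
Your proof is correct and follows essentially the same approach as the paper: both compute $\gamma' = W_Q\gamma(W_Q')^{-1}$ explicitly, check integrality and the $N$-divisibility of the lower-left entry by inspection, and then reduce the formula for $d_{\gamma'}$ modulo $R$ and $Q$ using the determinant relation $Qrv - Rut = 1$ together with $r\equiv r'\pmod{R}$, $u\equiv u'\pmod{Q}$. The only cosmetic differences are that the paper writes the lower-left entry of $\gamma$ as $cN$ rather than $c$ with $N\mid c$, and that it handles the mod-$Q$ step via $t\equiv t'\pmod{Q}$ (implicitly) rather than your $u\equiv u'\pmod{Q}$, which is an equivalent route through the same congruences.
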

\begin{remark}
\label{remark:AtkinLehnerRelation}
\normalfont By taking $\gamma = (\begin{smallmatrix} 1 & 0 \\ 0 & 1 \end{smallmatrix})$, Lemma \ref{a-lprop} gives that $W_Q = \alpha W_Q'$ for some $\alpha \in \Gamma_1(N)$.
\end{remark}
\begin{proof}
Say $\gamma = (\begin{smallmatrix} a & b \\ c N & d \end{smallmatrix}) \in \Gamma_0(N)$.
By brute force, we compute $\gamma' = W_Q \gamma W_Q'^{-1}$ as
\begin{equation}
\label{eq:bruteforce}
\gamma' = 
\begin{pmatrix}
Qrv'a+ Ntv'c-Nru'b-Rtu'd & -rt'a- Rtt'c+Qrr'b+tr'd \\
N(uv'a+Qvv'c-Ruu'b-vu'd) & -Rut'a-Nvt'c+Nur'b+Qvr'd
\end{pmatrix}.
\end{equation}
Since $\det(W_Q) = \det(W_Q') =Q$, then
 $\det(\gamma') = 1$.  By inspection, $\gamma'$ has integer entries, and its lower-left
entry is $\equiv 0 \pmod N$. Modulo $R$, the lower-right entry is 
$$d_{\gamma}' \equiv Qvr'd_{\gamma} 
\equiv 
(Qvr - Rut) d_{\gamma} \equiv d_{\gamma} \pmod{R},$$
where we have used $r' \equiv r_0 \equiv r \pmod{R}$.
Similarly, using $a \equiv d_{\gamma}^{-1} \pmod{N}$, we have
$$d_{\gamma}' \equiv -R u t' a \equiv (Qvr - Rut) d_{\gamma}^{-1} \equiv d_{\gamma}^{-1} \pmod{Q}. \qedhere$$
\end{proof}

The Atkin-Lehner operators act on functions as follows.
Suppose $g$ is on $\Gamma_0(N)$ with character $\psi$, meaning $g(\gamma z)=\psi(\gamma)g(z)$ for all $\gamma \in \Gamma_0(N)$ and $z \in \mathbb{H}$. Write $\psi=\psi^{(Q)}\psi^{(R)}$, where $\psi^{(Q)}$ has modulus $Q$ and $\psi^{(R)}$ has modulus $R$. Let $h(z)=g(W_Qz) = g(W_Q' z)$ (the latter equality following from Remark \ref{remark:AtkinLehnerRelation}). Then $h(\gamma z) = g(W_Q \gamma z) = g(\gamma' W_Q' z) = \psi(\gamma') h(z)$.  By Lemma \ref{a-lprop}, $\psi(\gamma') = \psi'(\gamma)$, where
\begin{equation}
\label{eq:psi'def}
\psi':=\overline{\psi}^{(Q)}\psi^{(R)}.
\end{equation}

\subsection{Eisenstein Series} \label{background}
\indent  Let $\chi_1, \chi_2$ be primitive Dirichlet characters modulo $q_1,q_2$, respectively, such that
$\chi_1 \chi_2(-1)= 1$.  By convention, we allow $q_i=1$, in which case $\chi_i(n)=1$ for all $n \in \mathbb{Z}$. The \emph{newform Eisenstein series} attached to $\chi_1, \chi_2$ is defined as
$$E_{\chi_1,\chi_2}(z,s)=\frac{1}{2}\sum\limits_{(c,d)=1}\frac{(q_2y)^{s}\chi_1(c)\chi_2(d)}{{|cq_2z+d |}^{2s}}, \hspace{1em} \text{Re}(s)>1. $$ 
\noindent The function
 $E_{\chi_1,\chi_2}$ is automorphic on $\Gamma_0(q_1 q_2)$ with character $\psi = \chi_1 \overline{\chi_2}$, and we refer the reader to \cite{young} as a convenient reference for this fact and others to follow. 

Define the ``completed" Eisenstein series by
\begin{equation}\label{completed}
     E_{\chi_1,\chi_2}^*(z,s)=\frac{{(q_2/\pi)}^s}{\tau(\chi_2)}\Gamma(s)L(2s,\chi_1\chi_2)E_{\chi_1,\chi_2}(z,s),
\end{equation}
\noindent where $\tau(\chi_2)$ denotes the Gauss sum, and $L(s, \chi_1 \chi_2) = \sum \limits_{n \geq 1} \chi_1 \chi_2(n) n^{-s}$. The Fourier expansion takes the form
\begin{equation} 
\label{fourier_expansion}
E_{\chi_1, \chi_2}^*(z,s) = 
e_{\chi_1, \chi_2}^*(y,s) + 
2\sqrt{y}\sum_{n \neq 0}\lambda_{\chi_1, \chi_2} (n,s) e(nx) K_{s-\frac{1}{2}} ( 2 \pi |n| y),
\end{equation}
 where $K_{\nu}$ is the $K$-Bessel function,
\begin{equation}
\label{eq:Echi1chi2FourierCoefficient}
 \lambda_{\chi_1, \chi_2} (n,s) = \chi_2 (\textnormal{sgn} (n) ) \sum_{ab=|n|}\chi_1 (a) \overline{\chi_2} (b) (b/a)^{s-\frac12},
\end{equation}
and the constant term $e_{\chi_1, \chi_2}^*(y,s)$ is given by
\begin{equation}
\delta_{q_1=1} q_2^{2s} \frac{\pi^{-s}}{\tau(\chi_2)} \Gamma(s) L(2s, \chi_2) y^s
+
\delta_{q_2=1} q_1^{2-2s} \frac{\pi^{-(1-s)}}{\tau(\overline{\chi_1})} \Gamma(1-s) L(2-2s,\overline{\chi_1}) y^{1-s}.
\end{equation}
Throughout the remainder of the paper, we assume that $q_1 q_2 > 1$.  Combined with \eqref{fourier_expansion}, this condition ensures that $e_{\chi_1, \chi_2}^*(y,s)$ and $E_{\chi_1, \chi_2}^*(z,s)$ are analytic for all $s \in \mathbb{C}$.
With some simplifications (see \cite[(1.5), (1.6)]{2019}), \eqref{fourier_expansion} specializes as 
\begin{equation}\label{kronecker}
E^*_{\chi_1,\chi_2}(z,1)= F_{\chi_1,\chi_2}(z) + \chi_2 (-1)\overline{F}_{\overline{\chi_1}, \overline{\chi_2}} (z),
\end{equation} 
where 
\begin{equation}
\label{eq:fchi1chi2FourierExpansion}
 F_{\chi_1,\chi_2}(z)=
 c_1 z + c_0 + 
 \sum_{n =1}^{\infty} \frac{\lambda_{\chi_1, \chi_2}(n,1)}{\sqrt{n}}  e(nz)
\end{equation}
and where
\begin{equation}
c_1 = \delta_{q_1=1} \frac{q_2^2 L(2, \chi_2)}{2 \pi i \tau(\chi_2)} = \delta_{q_1=1} \pi i L(-1, \overline{\chi_2}),
\qquad
\text{and}
\qquad
c_0 =  \delta_{q_2=1} \tfrac12 L(1,\chi_1).
\end{equation}
Note that the calculation of both constants $c_1$ and $c_0$ used the functional equation of the Dirichlet $L$-function. 

A key feature of the newform Eisenstein series is that they 
are pseudo-eigenfunctions of the  Atkin-Lehner operators.
By \cite[(9.3)]{young} (see also \cite{weisinger}), 
\begin{equation}\label{conversionC}
E_{\chi_1,\chi_2}(W_Q z, s) = C E_{\chi'_1, \chi'_2}(z,s),
\end{equation}
where $C = \chi_1^{(Q)}(-1) \psi^{(Q)}(q_1^{(R)} u_0) \overline{\psi}^{(R)}(q_2^{(Q)} r_0)$.
This evaluation of $C$ can be found in \cite[Section 9.1]{young}
with $r_0,u_0=1$, but it is easy to extend the calculation for general $r_0$ and $u_0$. 
Note that $\psi'$ defined by \eqref{eq:psi'def} equals $\chi_1' \overline{\chi_2}'$.
Similarly, for the completed Eisenstein series, we derive
\begin{equation} \label{completeEisen}
    E^{*}_{\chi_1,\chi_2}(W_Qz,1) = \beta E^*_{\chi'_1,\chi'_2}(z),
    \qquad \text{where} \qquad \beta = \frac{q_2 \tau(\chi'_2)}{q'_2 \tau(\chi_2)} C.
\end{equation}

\subsection{Dedekind Sums} \label{dedesums}
We now turn to the newform Dedekind sums that are constructed with the newform Eisenstein series.  
These Dedekind sums were defined in \cite{2019}, though with the assumption that $q_1 \neq 1$ \emph{and} $q_2 \neq 1$, in which case \eqref{dede} was derived after some extensive calculations.  We will extend their definition to cover the cases where at most one of the $q_i$ is $1$ (note that when $q_1 = q_2 = 1$, then this is the classical Dedekind sum, so this assumption is not restrictive).  Some of the proofs from \cite{2019} carry over nearly verbatim, in which case we will omit the details here and refer the reader to \cite{2019}.

For $\gamma \in \Gamma_0(N)$, define the function
\begin{equation} \label{phidef}
\phi_{\chi_1, \chi_2}(\gamma, z) = F_{\chi_1, \chi_2}(\gamma z) - \psi(\gamma) F_{\chi_1, \chi_2}(z),
\end{equation}
 where recall $\psi = \chi_1 \overline{\chi_2}$ and $\chi_1 \chi_2(-1) = 1$.
  This definition of $\phi_{\chi_1,\chi_2}$ extends that of \cite{2019} by allowing at most one of $c_1$ or $c_0$ to be nonzero. 
  The function $\phi_{\chi_1, \chi_2}(\gamma, z)$ is constant in terms of $z$; see
  \cite[Lemma 2.1]{2019} for a proof that applies to this extended definition.  Therefore we write $\phi_{\chi_1, \chi_2}(\gamma, z)$ more simply as $\phi_{\chi_1, \chi_2}(\gamma)$.
 As in \cite{2019}, we define the newform Dedekind sum by \begin{equation}\label{normalizing}
S_{\chi_1,\chi_2}(\gamma) = \frac{\tau(\overline{\chi_1})}{\pi i} \phi_{\chi_1,\chi_2}(\gamma).
\end{equation} 

The most important property of the newform Dedekind sum is that it is a group homomorphism on $\Gamma_1(N)$.  Precisely, we have
\begin{lemma}[\cite{2019} Lemma 2.2]\label{grouphomom}
Let $\gamma_1,\gamma_2 \in \Gamma_0(N).$ Then $$S_{\chi_1,\chi_2}(\gamma_1\gamma_2)=S_{\chi_1,\chi_2}(\gamma_1)+\psi(\gamma_1)S_{\chi_1,\chi_2}(\gamma_2).$$
\end{lemma}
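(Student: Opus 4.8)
The plan is to prove the cocycle-type identity
$$S_{\chi_1,\chi_2}(\gamma_1\gamma_2)=S_{\chi_1,\chi_2}(\gamma_1)+\psi(\gamma_1)S_{\chi_1,\chi_2}(\gamma_2)$$
by reducing it to the corresponding identity for the auxiliary function $\phi_{\chi_1,\chi_2}$ defined in \eqref{phidef}, since by \eqref{normalizing} the two differ only by the nonzero constant factor $\tau(\overline{\chi_1})/(\pi i)$. Thus it suffices to establish
$$\phi_{\chi_1,\chi_2}(\gamma_1\gamma_2)=\phi_{\chi_1,\chi_2}(\gamma_1)+\psi(\gamma_1)\phi_{\chi_1,\chi_2}(\gamma_2),$$
which I will verify directly from the definition.

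First I would write out $\phi_{\chi_1,\chi_2}(\gamma_1\gamma_2)=F_{\chi_1,\chi_2}(\gamma_1\gamma_2 z)-\psi(\gamma_1\gamma_2)F_{\chi_1,\chi_2}(z)$, evaluating at an arbitrary $z\in\mathbb{H}$ (legitimate because $\phi$ is $z$-independent, as recalled after \eqref{phidef}). The key idea is to introduce the intermediate point $\gamma_2 z$ and insert a telescoping term: rewrite $F_{\chi_1,\chi_2}(\gamma_1\gamma_2 z)$ using $\phi_{\chi_1,\chi_2}(\gamma_1, \gamma_2 z)=F_{\chi_1,\chi_2}(\gamma_1\gamma_2 z)-\psi(\gamma_1)F_{\chi_1,\chi_2}(\gamma_2 z)$, so that
$$F_{\chi_1,\chi_2}(\gamma_1\gamma_2 z)=\phi_{\chi_1,\chi_2}(\gamma_1)+\psi(\gamma_1)F_{\chi_1,\chi_2}(\gamma_2 z).$$
Then I would substitute $F_{\chi_1,\chi_2}(\gamma_2 z)=\phi_{\chi_1,\chi_2}(\gamma_2)+\psi(\gamma_2)F_{\chi_1,\chi_2}(z)$ into the right-hand side and collect terms.

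After these substitutions the expression becomes
$$\phi_{\chi_1,\chi_2}(\gamma_1)+\psi(\gamma_1)\phi_{\chi_1,\chi_2}(\gamma_2)+\psi(\gamma_1)\psi(\gamma_2)F_{\chi_1,\chi_2}(z)-\psi(\gamma_1\gamma_2)F_{\chi_1,\chi_2}(z),$$
and the proof closes by observing that $\psi$ is multiplicative, i.e.\ $\psi(\gamma_1\gamma_2)=\psi(\gamma_1)\psi(\gamma_2)$, so the last two terms cancel exactly. This multiplicativity holds because $\psi=\chi_1\overline{\chi_2}$ is determined by the lower-right entry of the matrix modulo $N$, and that entry is multiplicative under matrix multiplication on $\Gamma_0(N)$ (the lower-left entry being divisible by $N$).

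The only genuine subtlety — rather than an obstacle — is ensuring that the manipulations are valid in the extended setting where one of $q_1,q_2$ may equal $1$, so that $F_{\chi_1,\chi_2}$ carries a nonzero linear term ($c_1\neq 0$) or constant term ($c_0\neq 0$). The argument above is purely formal and uses only the definition \eqref{phidef} together with the multiplicativity of $\psi$, so it never appeals to the precise shape of $F_{\chi_1,\chi_2}$; hence it transfers verbatim. The one fact I would confirm is that $\phi_{\chi_1,\chi_2}(\gamma,z)$ remains $z$-independent in this extended range, which is exactly the content cited from \cite[Lemma 2.1]{2019} as applying to the extended definition. Granting that, the telescoping computation is immediate and the result follows.
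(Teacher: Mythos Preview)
Your argument is correct and is exactly the standard cocycle computation that the paper has in mind: the paper itself does not reproduce a proof here but simply cites \cite[Lemma~2.2]{2019} and notes that the same argument works verbatim when one of the $q_i$ equals $1$. Your telescoping via the intermediate point $\gamma_2 z$, together with the multiplicativity of $\psi$ on $\Gamma_0(N)$ and the $z$-independence of $\phi_{\chi_1,\chi_2}$, is precisely that argument.
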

\noindent The proof in \cite{2019} carries over identically to the case where $q_1 = 1$ or $q_2 = 1$.

 Note that $S_{\chi_1,\chi_2}(\gamma) = 0$ if and only if $F_{\chi_1, \chi_2}(\gamma z) = \psi(\gamma) F_{\chi_1, \chi_2}(z)$ for all $z \in \mathbb{H}$. So, the elements of $K_{\chi_1,\chi_2}$ are those for which $F_{\chi_1,\chi_2}$ transforms like an automorphic form.  This is some motivation for studying the kernel of the Dedekind sum.

\begin{proposition}\label{FirstColumn}
Suppose $\gamma_1,\gamma_2 \in \Gamma_0(N)$ have the same left column and suppose $q_1 \neq 1$. Then $S_{\chi_1,\chi_2}(\gamma_1) =  S_{\chi_1,\chi_2}(\gamma_2)$.
\end{proposition}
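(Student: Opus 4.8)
The plan is to realize $\gamma_2$ as a right translate of $\gamma_1$ by a power of the parabolic generator $T = (\begin{smallmatrix} 1 & 1 \\ 0 & 1 \end{smallmatrix})$, and then to invoke the homomorphism property of Lemma \ref{grouphomom} to reduce the claim to the single computation $S_{\chi_1,\chi_2}(T^n) = 0$.

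First I would record the elementary algebraic fact that two determinant-one matrices sharing a left column differ on the right by an upper unitriangular matrix. Writing $\gamma_1 = (\begin{smallmatrix} a & b_1 \\ c & d_1 \end{smallmatrix})$ and $\gamma_2 = (\begin{smallmatrix} a & b_2 \\ c & d_2 \end{smallmatrix})$, a direct computation of $\gamma_1^{-1}\gamma_2$ gives $(\begin{smallmatrix} 1 & n \\ 0 & 1 \end{smallmatrix}) = T^n$ with $n = d_1 b_2 - b_1 d_2 \in \mathbb{Z}$: the lower-left entry vanishes because both matrices share the column $(a,c)^{\mathsf{T}}$, and the diagonal entries are forced to equal $1$ by $\det \gamma_1 = \det \gamma_2 = 1$. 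Since $T^n \in \Gamma_0(N)$, Lemma \ref{grouphomom} applies to the factorization $\gamma_2 = \gamma_1 T^n$ and yields
$$S_{\chi_1,\chi_2}(\gamma_2) = S_{\chi_1,\chi_2}(\gamma_1) + \psi(\gamma_1)\, S_{\chi_1,\chi_2}(T^n),$$
so everything reduces to showing the last term is $0$.

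Next I would compute $S_{\chi_1,\chi_2}(T^n)$ directly from the definitions \eqref{phidef} and \eqref{normalizing}. Since $T^n z = z + n$ and $\psi(T^n) = 1$ (its lower-right entry is $1$), we get $\phi_{\chi_1,\chi_2}(T^n) = F_{\chi_1,\chi_2}(z+n) - F_{\chi_1,\chi_2}(z)$. In the Fourier expansion \eqref{eq:fchi1chi2FourierExpansion}, the constant $c_0$ and every exponential $e(mz)$ are invariant under $z \mapsto z+n$, so only the linear term contributes, leaving $\phi_{\chi_1,\chi_2}(T^n) = c_1 n$. The hypothesis $q_1 \neq 1$ is exactly what makes this vanish: by the formula for $c_1$, we have $c_1 = \delta_{q_1=1}\,\pi i\, L(-1,\overline{\chi_2}) = 0$ when $q_1 \neq 1$. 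Hence $S_{\chi_1,\chi_2}(T^n) = 0$, and the proposition follows.

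The only real subtlety, and the place where the hypothesis is essential, is the appearance of the linear term $c_1 z$ in $F_{\chi_1,\chi_2}$. Were $q_1 = 1$, the translation $z \mapsto z+n$ would genuinely alter the value of the Dedekind sum, reflecting the fact noted after \eqref{dede} that $S_{1,\chi_2}(\gamma)$ does not depend only on the left column of $\gamma$. Everything else is a routine application of the cocycle relation, so I expect no significant obstacle beyond correctly isolating this linear term.
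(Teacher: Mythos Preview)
Your proof is correct and follows essentially the same approach as the paper: both factor one matrix as the other times an upper unitriangular $\omega = T^n$, apply Lemma~\ref{grouphomom}, and then observe from \eqref{phidef} and \eqref{eq:fchi1chi2FourierExpansion} that $S_{\chi_1,\chi_2}(\omega)=0$ because $c_1=0$ when $q_1\neq 1$. Your write-up is simply more explicit about the value of $n$ and the role of the linear term $c_1 z$.
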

\begin{proof}
It follows from our conditions on $\gamma_1,\gamma_2$ that 
$\gamma_1 = \gamma_2 \omega$ with $\omega = (\begin{smallmatrix} 1 & b \\ 0 & 1 \end{smallmatrix})$ for some $b \in \mathbb{Z}$. Now, by Lemma \ref{grouphomom}, $S_{\chi_1,\chi_2}(\gamma_1) = S_{\chi_1,\chi_2}(\gamma_2 \omega) = S_{\chi_1,\chi_2}(\gamma_2)+\psi(\gamma_2)S_{\chi_1,\chi_2}(\omega).$ By \eqref{phidef} and \eqref{eq:fchi1chi2FourierExpansion},  if $q_1 \neq 1$ then $S_{\chi_1, \chi_2}(\omega) = 0$.
\end{proof}
Proposition \ref{FirstColumn} justifies our earlier notation $S_{\chi_1, \chi_2}(\begin{smallmatrix} a & b \\ c & d \end{smallmatrix}) = S_{\chi_1, \chi_2}(a,c)$, provided $q_1 \neq 1$. 
On the other hand, we have
\begin{equation}
\label{eq:DedSumUpperTriangularchi1trivial}
S_{1, \chi_2}(\begin{smallmatrix} 1 & b \\ 0 & 1 \end{smallmatrix}) = b L(-1, \chi_2),
\end{equation}
showing the condition $q_1 \neq 1$ is necessary in Proposition \ref{FirstColumn}.
Now define
\begin{equation}\label{phiWQ}
    \phi_{\chi_1,\chi_2}(W_Q, z)=\phi_{\chi_1,\chi_2}(W_Q)=F_{\chi_1,\chi_2}(W_Q z) - \beta F_{\chi'_1,\chi'_2}(z)
\end{equation} 
for $z \in \mathbb{H}$. Note, $\phi_{\chi_1,\chi_2}(W_Q)$ may depend on the choice of $r,t,u,$ and $v$ in $W_Q$, but $\beta$ does not (though $\beta$ does depend on $u_0$, $r_0$, the characters $\chi_i$, and so on).
\begin{lemma} 
\label{lemma:DedSumAtALconstant}
The function $\phi_{\chi_1,\chi_2}(W_Q,z)$ is independent of $z$.
\end{lemma}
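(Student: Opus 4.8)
The plan is to combine the Kronecker-limit decomposition \eqref{kronecker} with the Atkin--Lehner transformation law \eqref{completeEisen} for the completed Eisenstein series, and then to separate the holomorphic part from the anti-holomorphic part. First I would apply \eqref{kronecker} to $E^*_{\chi_1,\chi_2}$ evaluated at $W_Q z$, giving
$$E^*_{\chi_1,\chi_2}(W_Q z, 1) = F_{\chi_1,\chi_2}(W_Q z) + \chi_2(-1)\overline{F}_{\overline{\chi_1},\overline{\chi_2}}(W_Q z).$$
On the other hand, \eqref{completeEisen} rewrites the left-hand side as $\beta E^*_{\chi'_1,\chi'_2}(z)$, and a second application of \eqref{kronecker} (valid because $\chi'_1\chi'_2 = \chi_1\chi_2$, so the parity hypothesis $\chi'_1\chi'_2(-1)=1$ is inherited) expands this as
$$\beta E^*_{\chi'_1,\chi'_2}(z) = \beta F_{\chi'_1,\chi'_2}(z) + \beta\chi'_2(-1)\overline{F}_{\overline{\chi'_1},\overline{\chi'_2}}(z).$$

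Equating these two expressions and moving the holomorphic $F$-terms to one side, I would obtain
$$\phi_{\chi_1,\chi_2}(W_Q,z) = F_{\chi_1,\chi_2}(W_Q z) - \beta F_{\chi'_1,\chi'_2}(z) = \beta\chi'_2(-1)\overline{F}_{\overline{\chi'_1},\overline{\chi'_2}}(z) - \chi_2(-1)\overline{F}_{\overline{\chi_1},\overline{\chi_2}}(W_Q z),$$
where the first equality is just the definition \eqref{phiWQ}.

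The key observation is that the two outer expressions have opposite analytic character. By the Fourier expansion \eqref{eq:fchi1chi2FourierExpansion}, each $F_{\chi_1,\chi_2}$ is holomorphic on $\mathbb{H}$; since $z \mapsto W_Q z$ is a holomorphic M\"obius map of $\mathbb{H}$ to itself (as $\det W_Q = Q > 0$) and $\beta$ carries no $z$-dependence, the middle expression $\phi_{\chi_1,\chi_2}(W_Q,z)$ is holomorphic in $z$. The right-hand expression, being a constant combination of conjugates $\overline{F}$ composed with the holomorphic map $W_Q z$, is anti-holomorphic in $z$. A function that is simultaneously holomorphic and anti-holomorphic on the connected open set $\mathbb{H}$ is annihilated by both $\partial$ and $\overline{\partial}$, hence is constant, which is the claim.

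The argument is essentially forced once the two transformation laws are in hand; the only points requiring care are verifying that \eqref{kronecker} applies to the primed characters (for which one checks $\chi'_1\chi'_2(-1) = \chi_1\chi_2(-1) = 1$) and confirming that $\beta$ is a genuine constant, as noted after \eqref{phiWQ}. I do not anticipate a real obstacle here, since the holomorphic versus anti-holomorphic dichotomy does all the work once the Eisenstein series identities are invoked.
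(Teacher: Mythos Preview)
Your proof is correct and follows essentially the same approach as the paper's: combine \eqref{kronecker} and \eqref{completeEisen}, rearrange so that $\phi_{\chi_1,\chi_2}(W_Q,z)$ equals an anti-holomorphic expression, and conclude it is constant. The paper compresses your right-hand side into the single identity $\phi_{\chi_1,\chi_2}(W_Q,z) = -\chi_2(-1)\,\overline{\phi}_{\overline{\chi_1},\overline{\chi_2}}(W_Q,z)$, but the underlying argument is identical.
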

\begin{proof}
 From \eqref{kronecker} and \eqref{completeEisen}, it immediately follows that $$\phi_{\chi_1,\chi_2}(W_Q, z) = - \chi_2 (-1) \overline{\phi}_{\overline{\chi_1},\overline{\chi_2}}(W_Q z).$$ Since $\phi_{\chi_1,\chi_2}(W_Q, z)$ is holomorphic and $\overline{\phi}_{\overline{\chi_1},\overline{\chi_2}}(W_Q, z)$ is anti-holomorphic, $\phi_{\chi_1,\chi_2}(W_Q, z)$ must be constant in $z$. 
\end{proof}
Lemma \ref{lemma:DedSumAtALconstant} justifies writing $\phi_{\chi_1,\chi_2}(W_Q, z) = \phi_{\chi_1,\chi_2}(W_Q)$.
Analogously to (\ref{normalizing}), we define the Dedekind sum $S_{\chi_1,\chi_2}$ associated to $W_Q$ as 
\begin{equation} \label{dedeWQ}
    S_{\chi_1,\chi_2}(W_Q) = \frac{\tau (\overline{\chi_1})}{\pi i} \phi_{\chi_1,\chi_2}(W_Q).
   \end{equation}

\section{Proof of the generalized reciprocity formula} \label{reciprocity}
\begin{theorem}[Generalized Reciprocity Formula]\label{rec}
Let $\chi_1,\chi_2$ be primitive Dirichlet characters with moduli $q_1,q_2$, respectively, such that $q_1 q_2 > 1$, and $\chi_1 \chi_2(-1) = 1$.  Let $N = q_1 q_2$ and let $W_Q$ and $W_Q'$ be Atkin-Lehner operators as described in Section \ref{atkin}. Then 
for $\gamma, \gamma' \in \Gamma_0(N)$ related by
$W_Q \gamma = \gamma' W_Q'$, we have the following reciprocity formula:
\begin{equation}
\label{eq:ALreciprocityGeneral}
S_{\chi_1,\chi_2}(W_Q) + {\xi}S_{\chi'_1\chi'_2}(\gamma) =  \psi'(\gamma)S_{\chi_1,\chi_2}(W_Q') + S_{\chi_1,\chi_2}(\gamma'),
\end{equation}
\noindent where $\xi = \frac{\tau (\overline{\chi_1})}{\pi i}\beta$ and $\psi' = \chi_1'\overline{\chi_2'}.$ 
\end{theorem}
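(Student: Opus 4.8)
The plan is to evaluate $F_{\chi_1,\chi_2}$ at the single point $W_Q\gamma z = \gamma' W_Q' z$ in two ways and compare, exactly the cocycle idea underlying Lemma \ref{grouphomom} but now with an Atkin--Lehner operator inserted. The two inputs I would lean on are the constancy statements: the $z$-independence of $\phi_{\chi_1,\chi_2}(\delta, z)$ for $\delta \in \Gamma_0(N)$ (applied to $\gamma$ and to $\gamma'$) and Lemma \ref{lemma:DedSumAtALconstant} (applied to $W_Q$ and to $W_Q'$). It is worth noting at the outset that $W_Q$ and $W_Q'$ are built from the same $r_0,u_0$, so they produce the same scalar $\beta$ in \eqref{phiWQ}; this is what allows a single $\beta$ to appear throughout the computation.

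First I would peel off the outer $W_Q$: applying \eqref{phiWQ} at the point $\gamma z$ gives $F_{\chi_1,\chi_2}(W_Q\gamma z) = \phi_{\chi_1,\chi_2}(W_Q) + \beta F_{\chi'_1,\chi'_2}(\gamma z)$, and then \eqref{phidef} for the primed pair (whose attached character is $\psi' = \chi'_1\overline{\chi'_2}$ by \eqref{eq:psi'def}) expands $F_{\chi'_1,\chi'_2}(\gamma z) = \phi_{\chi'_1,\chi'_2}(\gamma) + \psi'(\gamma) F_{\chi'_1,\chi'_2}(z)$. Writing the same point as $\gamma'(W_Q' z)$ via \eqref{wqgamma} and instead peeling off $\gamma'$ first through \eqref{phidef}, then $W_Q'$ through \eqref{phiWQ}, gives $F_{\chi_1,\chi_2}(\gamma' W_Q' z) = \phi_{\chi_1,\chi_2}(\gamma') + \psi(\gamma')\phi_{\chi_1,\chi_2}(W_Q') + \psi(\gamma')\beta F_{\chi'_1,\chi'_2}(z)$.

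Equating the two expressions, the only $z$-dependent contributions are $\beta\psi'(\gamma)F_{\chi'_1,\chi'_2}(z)$ and $\beta\psi(\gamma')F_{\chi'_1,\chi'_2}(z)$. The crux is that these cancel precisely because $\psi(\gamma') = \psi'(\gamma)$, which is the content of Lemma \ref{a-lprop} together with \eqref{eq:psi'def}. I expect this character-matching to be the main obstacle: not that it is hard once Lemma \ref{a-lprop} is in hand, but it is the single place where the Atkin--Lehner relation $W_Q\gamma = \gamma' W_Q'$ is genuinely exploited, and without it the two sides would carry different automorphy factors so that no clean identity would survive. With those terms gone, what remains is the constant identity
\[ \phi_{\chi_1,\chi_2}(W_Q) + \beta\,\phi_{\chi'_1,\chi'_2}(\gamma) = \phi_{\chi_1,\chi_2}(\gamma') + \psi'(\gamma)\,\phi_{\chi_1,\chi_2}(W_Q'). \]

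Finally I would multiply through by $\tau(\overline{\chi_1})/(\pi i)$ and return to Dedekind sums via \eqref{normalizing} and \eqref{dedeWQ}. The three terms built from $\chi_1,\chi_2$ convert directly into $S_{\chi_1,\chi_2}(W_Q)$, $S_{\chi_1,\chi_2}(\gamma')$, and $\psi'(\gamma)S_{\chi_1,\chi_2}(W_Q')$, while the primed term becomes $\frac{\tau(\overline{\chi_1})}{\pi i}\beta\,\phi_{\chi'_1,\chi'_2}(\gamma) = \xi\,\phi_{\chi'_1,\chi'_2}(\gamma)$ with $\xi = \frac{\tau(\overline{\chi_1})}{\pi i}\beta$, which is the middle term of \eqref{eq:ALreciprocityGeneral} once it is expressed through the primed Dedekind sum. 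Rearranging then yields the stated reciprocity formula. I would expect the fussiest part of this endgame to be keeping the Gauss-sum normalization of the primed sum straight when identifying this last constant.
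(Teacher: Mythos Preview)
Your proposal is correct and follows essentially the same cocycle computation as the paper: both evaluate $F_{\chi_1,\chi_2}(W_Q\gamma z)=F_{\chi_1,\chi_2}(\gamma'W_Q' z)$ in two ways, use Lemma~\ref{a-lprop} to match $\psi(\gamma')=\psi'(\gamma)$, arrive at the identity $\phi_{\chi_1,\chi_2}(W_Q)+\beta\,\phi_{\chi'_1,\chi'_2}(\gamma)=\phi_{\chi_1,\chi_2}(\gamma')+\psi'(\gamma)\phi_{\chi_1,\chi_2}(W_Q')$, and then multiply by $\tau(\overline{\chi_1})/(\pi i)$. The only cosmetic difference is that the paper subtracts the $z$-dependent piece $\beta\psi'(\gamma)F_{\chi'_1,\chi'_2}(z)$ at the outset and computes the resulting expression two ways, whereas you carry it through and cancel it explicitly; your remark that $W_Q$ and $W_Q'$ share $r_0,u_0$ and hence the same $\beta$ is exactly the point the paper also notes.
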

\begin{proof}
Recall (\ref{phidef}) and (\ref{phiWQ}). Now we calculate
\begin{equation}\label{4}
F_{\chi_1,\chi_2}(W_Q \gamma z) - \beta \psi'(\gamma) F_{\chi'_1,\chi'_2}(z)
\end{equation}
in two ways. First, note that it equals  $$ \underbrace{F_{\chi_1,\chi_2}(W_Q \gamma z) - \beta F_{\chi'_1,\chi'_2}(\gamma z)}_{\phi_{\chi_1,\chi_2}(W_Q)} + \beta \underbrace{(F_{\chi'_1,\chi'_2}(\gamma z) - \psi'(\gamma) F_{\chi_1',\chi_2'}(z))}_{ \phi_{\chi_1',\chi_2'}(\gamma)}.$$
\noindent Alternatively, we use $W_Q \gamma = \gamma' W_Q'$ to see that (\ref{4}) equals $$F_{\chi_1,\chi_2}(\gamma' W_Q' z) - \psi'(\gamma)F_{\chi_1,\chi_2}(W'_Q z) + \psi'(\gamma)(F_{\chi_1,\chi_2}(W'_Q z) - \beta F_{\chi'_1,\chi'_2}( z)).$$ 
Using $\psi' (\gamma) = \psi(\gamma')$, this becomes $$\underbrace{F_{\chi_1,\chi_2}(\gamma' W_Q' z) - \psi(\gamma')F_{\chi_1,\chi_2}(W'_Q z)}_{\phi_{\chi_1,\chi_2}(\gamma ')} + \psi'(\gamma)\underbrace{(F_{\chi_1,\chi_2}(W'_Q z) - \beta F_{\chi'_1,\chi'_2}( z))}_{\phi_{\chi_1,\chi_2}(W_Q')}.$$
Since the value of $\beta$ depends solely on $r_0$ and $u_0$ and not $r,t,u$, and $v$, it remains unchanged between the formulas involving $W_Q$ and $W_Q'$. Equating the two expressions for \eqref{4}, we infer
$$\phi_{\chi_1,\chi_2}(W_Q) + \beta \phi_{\chi_1',\chi_2'}(\gamma) = \phi_{\chi_1,\chi_2}(\gamma ') + \psi'(\gamma) \phi_{\chi_1,\chi_2}(W_Q').$$ \noindent 
 By \eqref{normalizing} and \eqref{dedeWQ}, we deduce \eqref{eq:ALreciprocityGeneral}.
\end{proof}

\section{Kernel} \label{kernel}




In this section, we use
Theorem \ref{rec} to further the study initiated in \cite{2020} of the kernel of Dedekind sums.
We begin with a proof of Theorem \ref{kerthm1}.
\begin{proof}  
Given $r,u$ as stated in the theorem, let
$W_Q = (\begin{smallmatrix} Qr & t \\ Nu & Qv \end{smallmatrix})$ be an Atkin-Lehner operator (the conditions $(r,R) =1$ and $(u,Q)=1$ are sufficient to ensure such a matrix exists, and then $r_0, u_0$ are determined). 

Let 
 $\gamma=
(\begin{smallmatrix}
1 & -k  \\
0 & 1 \end{smallmatrix})$.  
Recall $\gamma'$ is defined by $W_Q \gamma = \gamma' W_Q$ and is calculated in \eqref{eq:bruteforce}, giving
\begin{equation}
\label{eq:gamma'formulaKernelSection}
 \gamma' = \begin{pmatrix}
1+Nkur & * \\
NRku^2 & *
\end{pmatrix}.
\end{equation}
The strategy for the proof is now to show that the assumptions in Theorem \ref{kerthm1} imply that 
$S_{\chi_1', \chi_2'}(\gamma) = 0$,
and so the reciprocity formula Theorem \ref{rec} implies 
$S_{\chi_1, \chi_2}(\gamma') = 0$.  The condition $q_1 \neq 1$ is required for $S_{\chi_1, \chi_2}(\gamma')$ to only depend on the left column of $\gamma'$.

We now show that the condition $q_2 \neq R$ implies that $q_1' \neq 1$.
Recall that $q_1' = (q_2, Q)(q_1, R)$, so that $q_1' = 1$ implies $(q_2, Q) =1$ and $(q_1, R) =1$.  However, these latter two conditions imply $q_2 = R$ (and also $q_1 = Q$), a contradiction.

Using $q_1' \neq 1$, and by \eqref{phidef} and \eqref{eq:fchi1chi2FourierExpansion}, we conclude 
$S_{\chi_1', \chi_2'}(\gamma) = 0$.
Therefore,
$S_{\chi_1, \chi_2}(\gamma') = 0$,
by the reciprocity formula.  That is, $S_{\chi_1, \chi_2}(1+Nkur, NRku^2) = 0$, where we have used $q_1 \neq 1$ in order to invoke Proposition \ref{FirstColumn}.
 
 Finally, by taking $\gamma=
(\begin{smallmatrix}
-1 & -k \\
0 & -1
\end{smallmatrix})$, we obtain 
\begin{equation} \label{-1}
     S_{\chi_1,\chi_2}(-1+Nkur,NRku^2)=0. \qedhere
\end{equation}
\end{proof}

In the following example, we elaborate on a special case of Theorem \ref{kerthm1} to demonstrate its versatility.
\begin{exmp}
Consider $K_{7,11}$ displayed in Figure \ref{k711}. It follows from Theorem \ref{kerthm1} (with 
$R=7$, $Q=11$, 
$u = 1$, and $(r, 7) = 1$) that $(\pm 1 + 77kr, 539k)$ is in the kernel.  This encompasses all elements with $c = 539k$, which are those circled in Figure \ref{k711}  (note that due to the $\pm1$ term in Theorem \ref{kerthm1}, each circle contains a pair of kernel elements with close upper-left entries).
\end{exmp}
\begin{figure}[H]
\centering
\includegraphics[width=3in]{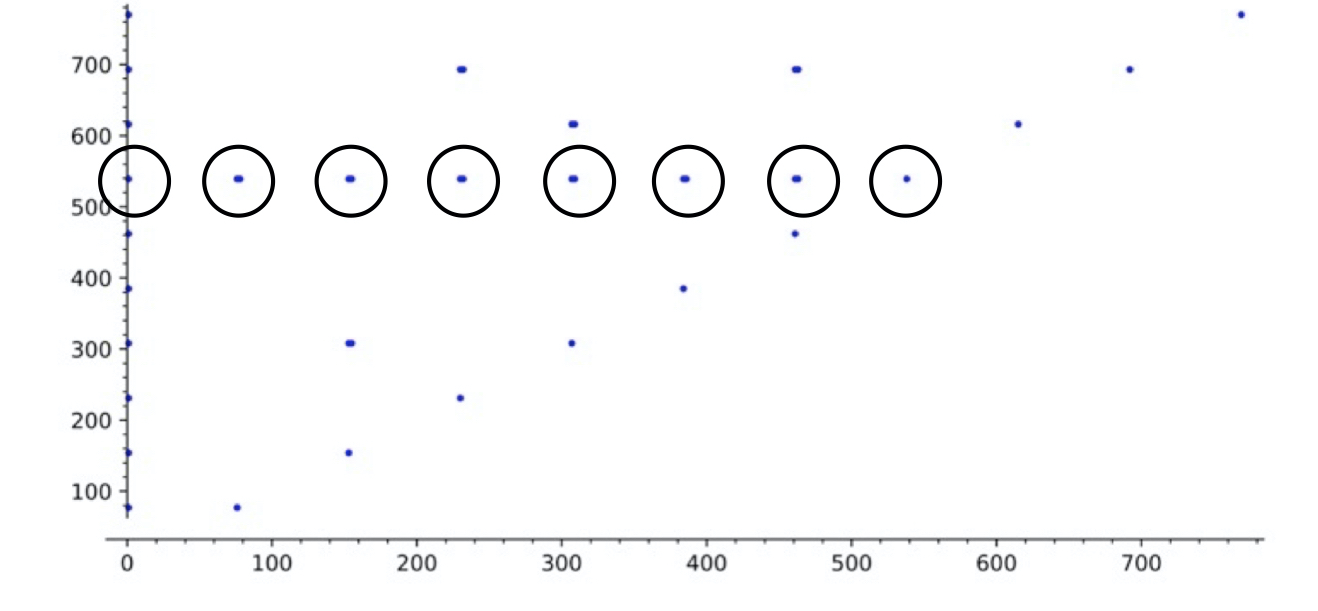}
\caption{Elements of $K_{7,11}$ for $1 \leq c \leq 10q_1q_2$} \label{k711}
\end{figure}

 Note that in Figures \ref{figureIntro2020elements} and \ref{k711}, the only kernel elements present in rows $c = Nkq_2$ are those equivalent to $(\pm 1, Nkq_2)$. We refer the reader to \cite{2020} for additional examples of this phenomenon. The following proposition gives some evidence for this sparseness by demonstrating that no element of the form $(a,c) = (\pm 1 + Nkr, Nkq_2)$ with nonzero $k$ is in the kernel. 
\begin{proposition}
Let $q_1 q_2 = N$ with $(q_1,q_2)=1$, $q_1 \neq 1$. Suppose $r \in \mathbb{Z}$ with $(r,q_2)=1$.  Then for any nonzero $k \in \mathbb{Z}$, we have  
\begin{equation}
\label{eq:DedekindSumDoesntVanish}
S_{\chi_1,\chi_2}(\pm 1 + Nk r, Nk q_2) \neq 0.
\end{equation}
\end{proposition}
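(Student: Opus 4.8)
The plan is to prove the contrapositive of a vanishing statement: rather than showing directly that $S_{\chi_1,\chi_2}(\pm 1 + Nkr, Nkq_2) \neq 0$, I would exploit the reciprocity formula in Theorem \ref{rec} to transport the question to a setting where the Dedekind sum is visibly nonzero. Since the row $c = Nkq_2$ is the ``critical'' row where $Q = q_2$, $R = q_1$ (so that $R = q_1$, $Q = q_2$, using $(q_1,q_2)=1$), I would set up an Atkin-Lehner operator $W_Q$ with $Q = q_2$ and $R = q_1$, chosen so that $\gamma' = (\begin{smallmatrix} \pm 1 + Nkr & * \\ Nkq_2 & * \end{smallmatrix})$ arises from a simple upper-triangular $\gamma$, exactly as in the proof of Theorem \ref{kerthm1}. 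The crucial difference from Theorem \ref{kerthm1} is that here $u$ plays no role (the lower-left entry is $Nkq_2 = NRku^2$ forces $u^2 \mid q_2/R$-type bookkeeping), so I would instead track carefully which character pair $(\chi_1', \chi_2')$ appears and evaluate $S_{\chi_1', \chi_2'}(\gamma)$ explicitly on the triangular matrix $\gamma$.

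First I would fix the matching: with $Q = q_2$ and $R = q_1$, formula \eqref{defprimes} gives $q_1' = (q_2, Q)(q_1, R) = q_2 \cdot q_1 = N$-portions that swap the full $Q$ and $R$ parts, so $\chi_1' = \chi_2^{(Q)}\chi_1^{(R)} = \chi_2 \chi_1 $ restricted appropriately; the key point is to identify whether $q_1' = 1$ or $q_2' = 1$. Since $(q_1, R) = (q_1, q_1) = q_1 \neq 1$ by hypothesis, we have $q_1' \supseteq q_1 \neq 1$, so $\chi_1'$ is nontrivial. This means $S_{\chi_1',\chi_2'}(\gamma)$ depends only on the first column of $\gamma$ by Proposition \ref{FirstColumn}, and for the upper-triangular $\gamma = (\begin{smallmatrix} 1 & -k \\ 0 & 1 \end{smallmatrix})$ this first-column argument forces $S_{\chi_1',\chi_2'}(\gamma) = 0$ — but that is the \emph{wrong} direction. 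So instead I would arrange the reciprocity so that the unknown $S_{\chi_1,\chi_2}(\gamma')$ equals, up to the unimodular constant $\xi$ and boundary terms $S_{\chi_1,\chi_2}(W_Q)$, $S_{\chi_1,\chi_2}(W_Q')$, a quantity I can pin down to be nonzero.

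The heart of the argument, then, is to reduce to an \emph{explicit evaluation} in the degenerate case where one modulus is $1$. Concretely, I expect the relevant transported sum to land on a value governed by \eqref{eq:DedSumUpperTriangularchi1trivial}, namely $S_{1,\chi_2}(\begin{smallmatrix} 1 & b \\ 0 & 1 \end{smallmatrix}) = bL(-1,\chi_2)$, scaled by $k$. Since $k \neq 0$ and $L(-1,\chi_2) \neq 0$ (this requires knowing the generalized Bernoulli number $B_{2,\chi_2}$ is nonzero, which holds for $\chi_2$ nontrivial even or via the functional equation for the completed $L$-value), the resulting expression cannot vanish. I would therefore: (i) write down $W_Q, W_Q'$ and compute $\gamma'$ via \eqref{eq:bruteforce} to confirm the first column is $(\pm 1 + Nkr, Nkq_2)$; (ii) apply Theorem \ref{rec} to express $S_{\chi_1,\chi_2}(\gamma')$ in terms of $S_{\chi_1',\chi_2'}(\gamma)$ and the two boundary terms $S_{\chi_1,\chi_2}(W_Q)$, $S_{\chi_1,\chi_2}(W_Q')$; (iii) evaluate each piece, using Lemma \ref{grouphomom} to split off the $k$-dependence linearly; and (iv) isolate the nonzero linear-in-$k$ contribution coming from an $L(-1,\cdot)$ factor.

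The main obstacle I anticipate is step (iii)–(iv): controlling the boundary terms $S_{\chi_1,\chi_2}(W_Q)$ and $\psi'(\gamma)S_{\chi_1,\chi_2}(W_Q')$, which are $k$-independent constants but could in principle conspire to cancel the $k$-linear growth if I have mis-identified where the $L(-1,\cdot)$ factor enters. The clean way around this is to use the homomorphism property directly: writing $\gamma' = \gamma_0' \cdot (\begin{smallmatrix} 1 & * \\ 0 & 1 \end{smallmatrix})^k$-type factorization so that Lemma \ref{grouphomom} exhibits $S_{\chi_1,\chi_2}(\gamma')$ as an affine function of $k$ with leading coefficient a nonzero multiple of $L(-1,\cdot)$; an affine function with nonzero slope vanishes for at most one value of $k$, and one then checks that the lone possible exceptional $k$ is $k=0$, which is excluded. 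The $\pm$ case is handled identically by replacing $\gamma$ with $(\begin{smallmatrix} -1 & -k \\ 0 & -1 \end{smallmatrix})$ exactly as in \eqref{-1}, noting that this only alters the constant term and unimodular prefactor, not the nonvanishing slope.
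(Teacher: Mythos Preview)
Your choice of $Q = q_2$, $R = q_1$ is backwards, and this is the source of all the subsequent difficulties. With that choice the lower-left entry of $\gamma'$ in \eqref{eq:gamma'formulaKernelSection} is $NRku^2 = Nq_1ku^2$, which equals $Nkq_2$ only if $q_1 u^2 = q_2$ --- impossible in general since $(q_1,q_2)=1$ and $q_1 \neq 1$. So your $\gamma'$ never has the first column $(\pm 1 + Nkr, Nkq_2)$ you are trying to reach, and the affine-in-$k$ salvage strategy is aimed at the wrong matrix.

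The paper instead takes $Q = q_1$, $R = q_2$. Then $u=1$ gives lower-left entry $NRku^2 = Nkq_2$ and upper-left entry $1 + Nkr$, matching exactly. More to the point, this choice yields $q_1' = (q_2,Q)(q_1,R) = (q_2,q_1)(q_1,q_2) = 1$, so the transported sum is $S_{1,\chi_2'}(\gamma)$ with $\chi_2' = \chi_1\chi_2$. Now \eqref{eq:DedSumUpperTriangularchi1trivial} applies directly: $S_{1,\chi_2'}\bigl(\begin{smallmatrix} 1 & -k \\ 0 & 1 \end{smallmatrix}\bigr) = -k\,L(-1,\overline{\chi_2'}) \neq 0$ for $k \neq 0$, since $L(-1,\overline{\chi_2'})$ is linked by the functional equation to $L(2,\chi_2')$, which lies in the region of absolute convergence. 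The reciprocity formula then gives $S_{\chi_1,\chi_2}(\gamma') \neq 0$ immediately, with no need to analyze the boundary terms $S_{\chi_1,\chi_2}(W_Q)$, $S_{\chi_1,\chi_2}(W_Q')$ at all (they cancel when $W_Q = W_Q'$, and $\psi'(\gamma)=1$). Your instinct that the nonvanishing should come from \eqref{eq:DedSumUpperTriangularchi1trivial} was correct; the fix is simply to swap $Q$ and $R$ so that it is $q_1'$, not $q_2'$, that degenerates to $1$.
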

\begin{proof}
The proof follows the same framework as Theorem \ref{kerthm1}.  Let $W_Q$ be an Atkin-Lehner operator with
$Q = q_1$ and $R = q_2$, so $(q_1,R)=1$ and $(q_2,Q)=1$. The effect of $W_Q$ on $q_1,q_2$ is such that $q'_1 = 1$.  Let $\gamma = (\begin{smallmatrix} 1 & -k \\ 0 & 1 \end{smallmatrix})$. By \eqref{eq:DedSumUpperTriangularchi1trivial}, $S_{1, \chi_2'}(\gamma) = -k L(-1, \overline{\chi_2}') \neq 0$.
Therefore, with $\gamma'$ as in \eqref{eq:gamma'formulaKernelSection}, we have $S_{\chi_1, \chi_2}(\gamma') \neq 0$. This shows the claim \eqref{eq:DedekindSumDoesntVanish}.
\end{proof}

\section{Acknowledgements}
This research was conducted in summer 2021 during the Texas A\&M University REU.  We thank the teaching assistants Agniva Dasgupta, Joshua Goldstein, and Zhengye Zhou for their support during the REU.  We  especially thank Evuilynn Nguyen and Juan J. Ramirez for the creation of the graphs seen throughout. Finally, we thank the Department of Mathematics at Texas A\&M and the NSF (DMS-1757872) for supporting the REU. This material is based upon work supported by the National Science Foundation under agreement No. DMS-2001306 (M.Y.). Any opinions, findings, and conclusions or recommendations expressed in this material are those of the authors and do not necessarily reflect the views of the National Science Foundation.

\bibliographystyle{alpha}
\bibliography{ref-bib}

\end{document}